\documentclass{llncs}
\usepackage[T1]{fontenc}
\usepackage[utf8]{inputenc}
\usepackage{amsmath,amssymb} 
\usepackage{graphicx,amsfonts,xcolor,tikz,hyperref,float}
\usepackage{mathtools}
\usepackage{booktabs}
\usepackage{enumitem}
\usepackage{orcidlink}
\usepackage{xcolor}

\hypersetup{pdftitle={A bounded-defect construction for the Mondrian art problem},pdfsubject={Pell families of defect 280 and 560}}

\newcommand{\Q}{\mathbb Q}
\newcommand{\N}{\mathbb N}
\newcommand{\defect}{\operatorname{def}}
\newcommand{\blfootnote}[1]{%
  \begingroup
  \renewcommand{\thefootnote}{}%
  \footnotetext[0]{#1}%
  \endgroup
}

\title{A Bounded-Defect Construction for the Mondrian Art Problem}
\author{Cooper O'Kuhn \and Freeman Slaughter\orcidlink{0009-0000-5787-6183}}
\institute{ University of South Florida, Tampa FL, USA \\
\email{\{cokuhn, fslaughter\}@usf.edu} 
}
\date{October 2026}

\begin{document}

\maketitle
\vspace{-1.5em}

\begin{abstract}
We construct an infinite family of solutions to the Mondrian art problem with absolute defect bounded above by $280$, of side lengths $n_0 = 315$, $n_1 = 106785$, and $n_{j+2} = 340n_{j+1} - n_j$, where $n_j$ is congruent to $315 \pmod{630}$. 
This absolute bound shows that the minimum defect does not necessarily increase with $n$, providing a counterexample to a conjecture which has been open since the problem's original proposal.
Our design is derived from a well-known seven-rectangle dissection over the real quadratic field $\Q(\sqrt{19})$, with non-integer side lengths;
we tailor the side lengths to be integers via an infinite Pell family.
Using the same method, we devise a second family with defect $560$ that contains infinitely many side lengths which are divisible by any given positive integer, implying that for any $k$ there are infinitely many side lengths $n \equiv 0 \pmod{k}$ with defect bounded above by $560$.
This constant bound offers a dramatic improvement on the previous quadratic bounds.

\blfootnote{Date: \today.}
\blfootnote{\emph{2020 Mathematics Subject Classification}: 52C20, 11D09, 05B45.}
\blfootnote{\emph{Keywords}: Mondrian art problem, integer-sided rectangular tilings, Pell equations.}
\blfootnote{This research was partially supported by NSF Grant DMS-2342254.}
\end{abstract}

\section{Introduction}
The Mondrian art problem is named after the Dutch painter Piet Mondrian who is best known for his De Stijl artworks primarily composed of colored rectangles. The problem asks how closely the areas of rectangles partitioning a square canvas can agree when repeated tiles are forbidden \cite{OF}. 
More precisely, for an integer $n \geq 3$, consider a partition $\mathcal P$ of the square canvas $[0,n] \times [0,n]$ into at least two axis-aligned rectangles with positive integer side lengths. 
The rectangles must be pairwise noncongruent, including after rotation. 
Thus, for example, a $2 \times 3$ rectangle and a $3 \times 2$ rectangle cannot appear together on the same canvas, due to this noncongruence restriction.
Denoting the area of a rectangle $R$ with $|R|$, we can then define the defect
\[
    \defect(\mathcal P) = \max_{R\in\mathcal P} |R| - \min_{R\in\mathcal P} |R|.
\]
The minimum defect, $M(n) = \min\limits_{\mathcal P} \defect(\mathcal P)$, is taken over all admissible partitions of the canvas.

A partition is called \emph{perfect} if achieves a defect of zero: its rectangles all have equal area, despite being pairwise noncongruent.
The central zero-defect question asked in \cite{OF} seeks to determine for which $n \geq 3$ the minimal defect satisfies $M(n) = 0$, if any. 
Equal areas alone are insufficient, since the corresponding integer-sided rectangles must also fit together with no gaps or overlaps.

The consensus is generally that $M(n)$ somehow increases with $n$, suggesting that no such integer $n$ satisfies $M(n) = 0$, though its exact asymptotic rate has been the subject of much consideration. 
We present a construction that dispels this notion: in fact, $M(n)$ can be bounded above along an infinite sequence by an absolute constant, independent of $n$.



\begin{theorem}\label{thm:main}
Define the recurrence relation
\[
    n_0 = 315, \quad n_1 = 106785, \quad n_{j+2} = 340n_{j+1} - n_j.
\]
For every $j \geq 0$, the $n_j \times n_j$ square has a Mondrian partition into seven pairwise noncongruent integer rectangles with defect $280$. 
Moreover, for all $j$,
\[
    n_j \equiv 315 \pmod{630}.
\]
Consequently $\liminf\limits_{n\to\infty} M(n) \leq 280$.
\end{theorem}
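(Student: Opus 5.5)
The plan is to start from the real seven-rectangle perfect dissection over $\Q(\sqrt{19})$. In that dissection every side length lies in $\Q(\sqrt{19})$, and the combinatorial layout forces only \emph{linear} relations among the side lengths: the lengths along each horizontal or vertical cut must sum to the canvas side. I would keep this layout fixed and replace each occurrence of $\sqrt{19}$ by a ratio $x/y$ of integers.

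Concretely, let $(x,y)$ range over integer solutions of a generalised Pell equation $x^2-19y^2=N$. The recurrence $n_{j+2}=340n_{j+1}-n_j$ is exactly multiplication by the fundamental unit $170+39\sqrt{19}$, since $170^2-19\cdot 39^2=1$ and $340 = 2\cdot 170$. So $n_j$ should be an integer affine-linear form $n = \alpha x+\beta y+\gamma$ evaluated along the orbit $(x_j+y_j\sqrt{19}) = (x_0+y_0\sqrt{19})(170+39\sqrt{19})^j$. The seven width/height pairs are then affine-linear forms in $(x,y)$, with their linear parts read off from the real dissection after clearing denominators. The constant shifts are chosen so that all the layout's linear constraints hold identically. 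This makes the dissection of the $n\times n$ square valid for every $(x,y)$ for which the side lengths are positive integers.

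Next I would compute the seven areas, each a quadratic polynomial in $(x,y)$. Because the real dissection is perfect at $x/y=\sqrt{19}$, the homogeneous quadratic part of each area is a common form plus a multiple of $x^2-19y^2$. On the Pell orbit that part therefore contributes only constants proportional to $N$. The remaining freedom, namely the constant shifts, the choice of $N$, and the base solution $(x_0,y_0)$, must be used to cancel the linear terms in the \emph{pairwise differences} of areas. Then each difference is an absolute constant, and I would verify that the largest is $280$.

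This step is the main obstacle: it is a small system of linear (in the shifts) and Diophantine conditions that has to be solved simultaneously with integrality. I would first try making all shifts rational multiples of a common denominator, then scale by the least common multiple. I would expect the modulus $630=2\cdot 3^2\cdot 5\cdot 7$ to emerge as the lcm needed for integrality.

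Once explicit formulas are in hand, the remaining claims are routine.
\begin{itemize}[leftmargin=*]
\item \emph{Integrality and positivity.} These hold for all $j$ by induction, since all side lengths satisfy the same linear recurrence, and integrality can be checked mod the relevant denominators on $j=0,1$.
\item \emph{Pairwise noncongruence.} The leading coefficients of the seven (width, height) forms in $(x,y)$ are distinct as unordered pairs, so the rectangles are noncongruent for all sufficiently large $j$. The first few cases I would check directly.
\item \emph{The congruence.} Since $106785=169\cdot 630+315$, we have $n_0\equiv n_1\equiv 315 \pmod{630}$. Inductively, $n_{j+2}\equiv 340\cdot 315-315=339\cdot 315\equiv 315 \pmod{630}$, because $339$ is odd and $2\cdot 315=630$.
\item \emph{The liminf bound.} The sequence $n_j$ is strictly increasing to infinity, since $340n_{j+1}-n_j>n_{j+1}$ once $n_{j+1}>n_j>0$. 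Together with $M(n_j)\le 280$, this gives $\liminf_{n\to\infty} M(n)\le 280$.
\end{itemize}
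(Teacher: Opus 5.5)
Your congruence step (the direct induction $n_{j+2}\equiv 339\cdot 315\equiv 315 \pmod{630}$) and your $\liminf$ step are correct. The gap is that you never exhibit the dissection, so nothing in the proposal shows that the $n_j\times n_j$ square admits a partition with defect $280$, or that the family starts at $315$. Those are exactly the steps you defer (``I would verify that the largest is $280$'', ``I would expect the modulus $630$ to emerge''). The missing content is explicit. Take $n=105q$ and cuts $a=7(7q-p)$, $b=42q$, $c=7(7q+p)$, $d=5(8q-p)$, $e=5(13q-p)$, $f=\tfrac{15}{2}(7q+p)$. Set $B=1575q^2$ and $E=p^2-19q^2$. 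The seven areas are then $B-35E$ (twice), $B-\tfrac{105}{2}E$ (twice), $B$, and $B+\tfrac{175}{2}E$ (twice), so the defect is $140|E|$. Integrality forces $p\equiv q\pmod 2$, hence $|E|\ge 2$. The orbit of $(13,3)$, which has norm $-2$, under $170+39\sqrt{19}$ attains $280$ with $n_j=105q_j$.

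The ``main obstacle'' you describe is self-inflicted. The layout constraints are homogeneous linear equations with rational coefficients that hold at $(\sqrt{19},1)$. By irrationality of $\sqrt{19}$, they therefore hold identically for the homogeneous forms obtained by substituting $\sqrt{19}\mapsto x/y$ and clearing denominators. No constant shifts are needed, and without shifts the area differences are exact rational multiples of $x^2-19y^2$, with no linear terms to cancel. Nonzero shifts would actually break your later steps. An affine form $\alpha x+\beta y+\gamma$ along the orbit satisfies $z_{j+2}-340z_{j+1}+z_j=-338\gamma$, so neither $n_j$ nor the side lengths would obey the homogeneous recurrence you invoke for the stated $n_j$ and for integrality.

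Finally, noncongruence ``for sufficiently large $j$'' plus checking ``the first few cases'' is not a proof without an explicit threshold. The paper's argument is uniform in $(p,q)$. Rectangles in distinct area classes are separated because $E\neq 0$. Each equal-area pair has sides $(\alpha u,\beta v)$ and $(\alpha v,\beta u)$, whose perimeters differ by $2(\alpha-\beta)(u-v)\neq 0$.
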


Our starting point is the real equal-area seven-rectangle dissection from Dalf\'o, Fiol, L\'opez, and Mart\'inez-P\'erez \cite{DFLM}. 
The coordinates of this construction lie in the real quadratic field $\Q(\sqrt{19})$. 
Lebowitz-Lockard and Tenenbaum \cite{LLT} reproduce this dissection, but attribute it to the pseudonymous Blanche Descartes \cite{Descartes}. 
We continue this line of work by utilizing this construction, specialized to solutions of a Pell equation.

Again, it is still an open problem whether $M(n) = 0$ for some $n$, though it is conjectured that no such $n$ exists. 
The first author and Fellman \cite{OF} gave one of the initial lower bounds for the number of $n \leq x$ satisfying $M(n) > 0$ of order $\tfrac{x}{\log\log(x)}$, improved from an earlier preprint obtaining $\tfrac{x\log\log(x)}{\log(x)}$, also by the first author \cite{O}. 
They additionally observed that a single perfect partition would imply an infinite family, as scaling all sides by the same multiple leads to a larger solution with the same zero defect. 
Thus, to show that no such integers exist, it suffices to show that the set of such integers $n \leq x$ is at most density 0. 
More recently, Garc\'ia-Col\'in et al. \cite{GCLMR} exclude perfect partitions for all $n \leq 1000$. 
An August 2026 manuscript by Lebowitz-Lockard et al. \cite{LLT} shows that a positive proportion of integers have nonzero defect, and in September 2026, a preprint by Garrison et al. \cite{GSS} was the first to give a non-trivial upper bound on the defect itself: that $M(n) \leq 10n^{5/6}$ for all sufficiently large $n$. 

The prevailing sentiment was that $M(n) \neq 0$ for all $n$ \textit{simply as a consequence} of the observation that $M(n)$ grows with $n$, a trend reflected in known examples with $n \leq 900$ from \cite{Pegg}. Theorem~\ref{thm:main} disproves this, and thus if this zero-defect conjecture is true, it requires a more delicate line of reasoning.

\section{The Construction}
Take $p,q \in \N$ such that
\begin{equation}\label{eq:conditions}
    q < p < 7q, \quad p \equiv q \pmod 2.
\end{equation}
Set $n = 105q$ and define the parameters
\begin{equation}\label{eq:cuts}
    \begin{aligned}
        a &= 7(7q-p), & b &=42q, & c &= 7(7q+p), \\
        d &= 5(8q-p), & e &=5(13q-p), & f &= \frac{15}{2}(7q+p).
    \end{aligned}
\end{equation}
Note that all of these coordinates are integers, since $p$ and $q$ have the same parity. 
The inequalities in \eqref{eq:conditions} imply an ordering of parameters,
\[
    0 < a < b < c < n, \quad 0 < d < e < f < n.
\]
This can be readily seen by taking pairwise differences; for example, $b - a = 7(p-q) > 0$, $n - f = \frac{15}{2} (7q-p) > 0$, and $f - e = \frac{25}{2} (p-q) > 0$.

Define seven rectangles by their intervals as follows:
\begin{equation}\label{eq:rectangles}
    \begin{aligned}
        R_1 &= [a,n] \times [0,d], & R_2 &= [0,a] \times [0,f], \\
        R_3 &= [0,c] \times [f,n], & R_4 &= [c,n] \times [e,n], \\
        R_5 &= [b,n] \times [d,e], & R_6 &= [a,b] \times [d,f], \\
        R_7 &= [b,c] \times [e,f].
    \end{aligned}
\end{equation}
These rectangles have disjoint interiors and cover the square, hence they form a partition of the canvas. 
Indeed, in the successive horizontal bands with endpoints $0,d,e,f,n$, the left-to-right lists of rectangles going from bottom to top are
\[
    (R_2,R_1), \quad (R_2,R_6,R_5), \quad (R_2,R_6,R_7,R_4), \quad (R_3,R_4).
\]
See Figure~\ref{fig:315 example} for our generic Mondrian dissection. 

\begin{figure}
    \centering
    \begin{center}
    \begin{tikzpicture}[x=0.015cm,y=0.015cm,every node/.style={font=\small}]
    \draw (56,0) rectangle (315,55);
    \draw (0,0) rectangle (56,255);
    \draw (0,255) rectangle (238,315);
    \draw (238,130) rectangle (315,315);
    \draw (126,55) rectangle (315,130);
    \draw (56,55) rectangle (126,255);
    \draw (126,130) rectangle (238,255);
    \node at (185.5,27.5) {$R_1$};
    \node at (28,127.5) {$R_2$};
    \node at (119,285) {$R_3$};
    \node at (276.5,222.5) {$R_4$};
    \node at (220.5,92.5) {$R_5$};
    \node at (91,155) {$R_6$};
    \node at (182,192.5) {$R_7$};
    \node[below] at (157.5,0) {};
    \end{tikzpicture}
    \end{center}
    \caption{The Mondrian canvas for our generic construction. Note the superficial similarities to the Blanche dissection \cite{Descartes}.}
    \label{fig:315 example}
\end{figure}

Their widths and heights, which are integers by the argument above, are as follows:
\begin{center}
    \renewcommand{\arraystretch}{1.3}
    \begin{tabular}{c@{\quad}c@{\quad}c}
    \toprule
    Rectangle & Width & Height\\
    \midrule
    $R_1$ & $7(8q+p)$ & $5(8q-p)$ \\
    $R_2$ & $7(7q-p)$ & $\frac{15}{2}(7q+p)$ \\
    $R_3$ & $7(7q+p)$ & $\frac{15}{2}(7q-p)$ \\
    $R_4$ & $7(8q-p)$ & $5(8q+p)$ \\
    $R_5$ & $63q$ & $25q$ \\
    $R_6$ & $7(p-q)$ & $\frac{25}{2}(p+q)$ \\
    $R_7$ & $7(p+q)$ & $\frac{25}{2}(p-q)$ \\
    \bottomrule
    \end{tabular}
\end{center}

For $D \in \N$ nonsquare, Pell's equation \cite{Lenstra,Stillwell} is defined as 
\[
    u^2 - Dv^2 = 1,
\]
where integer solutions $u,v$ are sought. 
Every positive solution can be used to generate infinitely many, by taking powers of $u + v \sqrt D$ and collecting terms. 
More generally, multiplying $p + q \sqrt D$ by $u + v \sqrt D$ preserves the value of $p^2 - Dq^2$.
This permits our parameters to grow while still satisfying the same fixed quadratic relation, which allows our construction to maintain a constant area defect over all parameter regimes.

\begin{proposition}\label{prop:area}
    For the integer dissection above, consider the Pell equation
    \[
        E = p^2 - 19q^2.
    \]
    Then all seven rectangles are pairwise noncongruent, and for the partition constructed as $\mathcal{P} = \{R_1, \dots, R_7\}$ in Figure \ref{fig:315 example},
    \begin{equation}\label{eq:defect}
        \defect(\mathcal{P}) = 140|E|.
    \end{equation}
\end{proposition}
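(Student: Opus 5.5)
The plan is a direct computation. First I compute each rectangle's area from the width/height table as a quadratic form in $p,q$. Then I eliminate $p^2$ using $p^2 = 19q^2 + E$. This writes every area as $1575q^2$ plus a rational multiple of $E$, from which the defect can be read off. Noncongruence then reduces to comparing the few pairs of rectangles that can share an area.

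\textbf{Step 1: areas.} Multiplying widths by heights gives
\[
|R_1| = |R_4| = 35(64q^2 - p^2), \quad |R_2| = |R_3| = \tfrac{105}{2}(49q^2 - p^2),
\]
\[
|R_5| = 1575q^2, \quad |R_6| = |R_7| = \tfrac{175}{2}(p^2 - q^2).
\]
Substituting $p^2 = 19q^2 + E$ gives
\[
|R_1| = |R_4| = 1575q^2 - 35E, \qquad |R_2| = |R_3| = 1575q^2 - \tfrac{105}{2}E,
\]
\[
|R_6| = |R_7| = 1575q^2 + \tfrac{175}{2}E.
\]
So the areas are $1575q^2 + \lambda E$ with $\lambda \in \{-\tfrac{105}{2}, -35, 0, \tfrac{175}{2}\}$. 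Whatever the sign of $E$, the maximum minus the minimum is $\bigl(\tfrac{175}{2} + \tfrac{105}{2}\bigr)|E| = 140|E|$, which is \eqref{eq:defect}.

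\textbf{Step 2: $E \neq 0$.} Since $19$ is not a perfect square and $q \geq 1$, we have $p^2 \neq 19q^2$, so $E \neq 0$. The four values of $\lambda$ are distinct, so rectangles in different area classes have different areas and cannot be congruent. It remains to compare $R_1$ with $R_4$, $R_2$ with $R_3$, and $R_6$ with $R_7$.

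\textbf{Step 3: same-area pairs.} Two rectangles are congruent exactly when their unordered side pairs agree.
\begin{itemize}
\item For $R_1$ and $R_4$, the side pairs are $\{7(8q+p), 5(8q-p)\}$ and $\{7(8q-p), 5(8q+p)\}$. Matching $7(8q+p) = 7(8q-p)$ forces $p = 0$. Matching $7(8q+p) = 5(8q+p)$ forces $8q + p = 0$. Both contradict $p, q > 0$.
\item For $R_2$ and $R_3$, the side pairs are $\{7(7q-p), \tfrac{15}{2}(7q+p)\}$ and $\{7(7q+p), \tfrac{15}{2}(7q-p)\}$. Matching the widths needs $p = 0$. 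Matching across needs $7(7q-p) = \tfrac{15}{2}(7q-p)$, i.e.\ $7q = p$, which \eqref{eq:conditions} excludes.
\item For $R_6$ and $R_7$, the side pairs are $\{7(p-q), \tfrac{25}{2}(p+q)\}$ and $\{7(p+q), \tfrac{25}{2}(p-q)\}$. Matching the widths needs $q = 0$. Matching across needs $p = q$. Both are excluded by \eqref{eq:conditions}.
\end{itemize}

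The main obstacle is bookkeeping rather than an idea. I must be sure that distinct $\lambda$ together with $E \neq 0$ really rule out every cross-class coincidence. I must also be sure that the strict inequalities $q < p < 7q$ are exactly what the same-area comparisons need. Both points are covered by the checks above.
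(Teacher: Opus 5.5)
Your proof is correct and follows the paper's argument: the same direct multiplication writes every area as $1575q^2+\lambda E$ with $\lambda\in\{-\tfrac{105}{2},-35,0,\tfrac{175}{2}\}$, which gives $140|E|$ whatever the sign of $E$. The only minor difference is the noncongruence check: the paper compares perimeters of each equal-area pair, which differ by $2(\alpha-\beta)(u-v)\neq 0$ and so need only $p,q\neq 0$, whereas you match sides directly using the strict inequalities $q<p<7q$, and you also make explicit (via distinct $\lambda$ and $E\neq 0$) the cross-class cases the paper leaves implicit.
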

\begin{proof}
    Define $B = 1575q^2 = \tfrac{n^2}{7}$.
    Direct multiplication gives
    \begin{equation}\label{eq:areas}
        \begin{aligned}
            |R_1| = |R_4| &= 35(64q^2-p^2) = B - 35E, \\
            |R_2| = |R_3| &= \frac{105}{2} (49q^2 - p^2) = B - \frac{105}{2}E, \\
            |R_5| &= 1575q^2 = B, \\
            |R_6| = |R_7| &= \frac{175}{2} (p^2-q^2) = B + \frac{175}{2}E.
        \end{aligned}
    \end{equation}
    Note that $E \neq 0$, since $\sqrt{19}$ is irrational, and $q>0$.
    The four coefficients of $E$ have a maximum difference of
    \[
        \frac{175}{2} - \left(-\frac{105}{2}\right) = 140,
    \]
    which proves \eqref{eq:defect}. \cite{Stillwell}
    
    Within each of the three equal-area pairs, the side lengths may be parameterized as \((\alpha u, \beta v)\) and \((\alpha v, \beta u)\), where \(u \neq v\) and \(\alpha \neq \beta\). 
    Their perimeters differ by \(2(\alpha - \beta)(u - v) \neq 0\), so the rectangles in this construction cannot be congruent. 
    For $(R_1, R_4)$ take $(\alpha, \beta) = (7, 5)$ and $(u, v) = (8q + p, 8q - p)$, then for $(R_2, R_3)$ take $(\alpha, \beta) = (7, 15/2)$ and $(u, v) = (7q - p, 7q + p)$, 
    and finally for $(R_6, R_7)$ take $(\alpha, \beta) = (7, 25/2)$ and $(u, v) = (p - q, p + q)$. 
    Thus, none of the equal area pairs are congruent, and clearly none can be congruent to $R_5$. 
\end{proof}

\begin{example}
    For the base case $(p, q) = (13, 3)$, the rectangle lengths are
    \[
        (a, b, c) = (56, 126, 238), \quad (d, e, f) = (55, 130, 255),
    \]
    which corresponds to canvas length $n = 315$.
    The rectangle dimensions, respectively, are then
    \[
        259 \times 55, \quad 56 \times 255, \quad 238 \times 60, \quad 
        77 \times 185, \quad 189 \times 75, \quad 70 \times 200, \quad 112 \times 125.
    \]
    The largest area is $238 \times 60 = 14280$, while the smallest is $70 \times 200 = 14000$, so the defect of this partition is their difference: $280$.
    This explicit example shows that $M(315) \leq 280$, but it may not be the optimal construction for $n = 315$.
\end{example}

\section{Proof of Main Theorem}
\begin{proof}[Proof of Theorem~\ref{thm:main}]
    Define
    \begin{equation}\label{eq:pell}
        p_j + q_j \sqrt{19} = (13 + 3\sqrt{19}) (170 + 39\sqrt{19})^j.
    \end{equation}
    or equivalently, $(p_0, q_0) = (13, 3)$ and 
    \begin{equation}\label{eq:pell recursive}
        p_{j+1} + q_{j+1} \sqrt{19} = (170 + 39\sqrt{19}) (p_{j} + q_{j}\sqrt{19}).
    \end{equation}
    
    Expanding and equating coefficients in \eqref{eq:pell recursive} uncovers the recurrence relation
    
    \begin{equation}\label{eq:recurrence}
        \begin{pmatrix}
            p_{j+1} \\
            q_{j+1}  
        \end{pmatrix}
        =
        \begin{pmatrix}
            170 & 741 \\
            39 & 170
        \end{pmatrix}
        \begin{pmatrix}
            p_{j} \\
            q_{j}  
        \end{pmatrix}
    \end{equation}
    
    Now, leveraging the multiplicativity of the norm on the real quadratic field $\Q[\sqrt{19}]$ together with the simple identities
    \[
        13^2 - 19 \cdot 3^2 = -2 \ \text{ and } \ 170^2 - 19 \cdot 39^2 = 1,
    \]
    we obtain, from \eqref{eq:pell},
    \[
        p_j^2 - 19q_j^2 = -2.
    \]
    
    Starting from the initial $(13, 3)$, both parameters remain positive and odd. 
    Additionally, since $q_{j+1} \equiv 2q_j \pmod 3$, it's straightforward to see that $3 \mid q_j$ for every $j$, hence $q_j \equiv 3 \pmod 6$.
    
    The norm equation implies $q_j < p_j < 7q_j$, so our construction applies for each $j$, thus Proposition~\ref{prop:area} gives that each have defect $280$. 
    More explicitly, the four distinct areas are
    \[
        B - 175, \quad B, \quad B + 70, \quad B + 105.
    \] 
    Setting $n_j = 105q_j$, we have that $n_j \equiv 315 \pmod{630}$. 
    Since the recurrence matrix in \eqref{eq:recurrence} has trace $340$ and determinant $1$, each pair satisfies the second-order recurrence provided.
    Since the recurrence given in \eqref{eq:pell recursive} is strictly increasing, the sequence of $q_j$, and hence $n_j$, grow without bound.
\end{proof}

The first few initial values for these parameters are highlighted in Table \ref{tab:Initial}.


\begin{table}
    \centering
    \setlength{\tabcolsep}{2mm}
    \begin{tabular}{c||cccc}
          & $0$ & $1$ & $2$ & $3$  \\
         \hline \hline
         $(p_j, q_j)$ & $(13,3)$ & $(4433,1017)$ & $(1507207,345777)$ & $(512445947,117563163)$  \\
         $n_j$ & $315$ & $106785$ & $36306585$ & $12344132115$    \\
    \end{tabular}
    \caption{Initial values of our construction, with $j$ on top.}
    \label{tab:Initial}
\end{table}

By standard results on Pell equations, this sequence grows exponentially with power $\alpha = 170 + 39\sqrt{19}$, hence its counting function is
\[
    \#\{j \mid n_j \leq x\} = \frac{\log(x)}{\log(\alpha)} + O(1).
\]
Our construction demonstrates the bound is attained for infinitely many side lengths in this congruence class, but not that it holds for every sufficiently large side length within the class.

One might ask what congruence classes these side lengths fall into; this next results shows that this family sits in a particularly simple class for all congruences $k$.

\begin{corollary}\label{cor:residue}
    For every fixed positive integer $k$, infinitely many members of this defect-$280$ family satisfy $n_j \equiv 315 \pmod k$.
\end{corollary}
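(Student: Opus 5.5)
Since $n_0 = 315$, the claim reduces to showing that the sequence $(n_j \bmod k)_{j \ge 0}$ returns to its initial value infinitely often. The natural tool is the linear recurrence of Theorem~\ref{thm:main}, viewed modulo $k$. Rather than work with the scalar second-order recurrence, I would use the matrix form \eqref{eq:recurrence}. Let
\[
M = \begin{pmatrix} 170 & 741 \\ 39 & 170 \end{pmatrix},
\]
so that $(p_j, q_j)^{T} = M^j (13, 3)^{T}$ and $n_j = 105 q_j$. Since $\det M = 1$, the reduction of $M$ lies in the finite group $SL_2(\mathbb Z/k\mathbb Z)$. It therefore has some finite order $t \ge 1$ there, meaning $M^t \equiv I \pmod k$.

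Granting this, the argument is immediate. For every $m \ge 0$ we have $M^{mt} \equiv I \pmod k$. Hence $(p_{mt}, q_{mt}) \equiv (13, 3) \pmod k$, which gives $q_{mt} \equiv 3 \pmod k$ and so $n_{mt} = 105 q_{mt} \equiv 315 \pmod k$. The indices $j = mt$ for $m = 0, 1, 2, \dots$ are infinitely many. Each of these $n_j$ belongs to the defect-$280$ family by Theorem~\ref{thm:main}, and the $n_j$ are pairwise distinct because the sequence is strictly increasing, as noted at the end of that proof.

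An alternative route avoids the matrix. The pair $(n_j, n_{j+1}) \bmod k$ takes at most $k^2$ values, so it is eventually periodic. The recurrence $n_{j+2} = 340 n_{j+1} - n_j$ can also be run backwards as $n_j = 340 n_{j+1} - n_{j+2}$, so this map on pairs is a bijection of $(\mathbb Z/k\mathbb Z)^2$. A bijection of a finite set gives purely periodic orbits, so the initial pair $(315, 106785)$ recurs, and with it the residue $315$.

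The only point needing care is that periodicity is pure rather than merely eventual. A merely eventual period could in principle never revisit $j = 0$. Invertibility supplies this: $\det M = 1$ in the matrix version, and backward solvability of the recurrence in the scalar version. I therefore expect no genuine obstacle, only making sure invertibility modulo $k$ is stated explicitly.
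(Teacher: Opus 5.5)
Your proof is correct and is essentially the paper's argument: the recurrence matrix has determinant $1$, so its reduction has finite order $N$ in the finite group $SL_2(\mathbb{Z}/k\mathbb{Z})$, which gives $(p_{mN},q_{mN})\equiv(13,3)\pmod k$ and hence $n_{mN}\equiv 315\pmod k$. Your scalar alternative, which uses that the pair map is a bijection of $(\mathbb{Z}/k\mathbb{Z})^2$, is equally valid, and your remark that the $n_j$ are pairwise distinct makes explicit a point the paper leaves implicit.
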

\begin{proof}
    The recurrence matrix in \eqref{eq:recurrence}, call it $A$, has determinant 1, so is invertible modulo $k$ for every $k$. 
    Taking repeated powers of $A$ forms a finite matrix group, so there exists a positive power $N$ such that $A^N$ is the identity modulo $k$. 
    Along the subsequence iterated by $N$, we have $(p_j,q_j) \equiv (13,3) \pmod k$, and therefore $n_j \equiv 105 \cdot 3 = 315 \pmod k$.
\end{proof}

Suppose one starts with a Mondrian partition of the $n \times n$ square. 
Multiplying every side length by $k \in \N$ provides the trivial bound $M(kn) \leq k^2 M(n)$, which immediately shows why a single perfect partition provides infinitely many. 
As a consequence, the defect-$280$ family provided implies that for all fixed $k$, 
\[
    \liminf_{r \rightarrow \infty} M(kr) \leq 280 k^2.
\]
It is an open line of research to determine if it is possible to remove the quadratic dependence of $k$ from this equation, by replacing it with a slower-growing function. The best in the literature seems to be the spiral family constructed in \cite{DFL}, for instance, with $n = \tfrac{k!}{2}$ and defect $\tfrac{[(k-1)!!]^2}{2}$, a slight improvement over this quadratic dependence. This next corollary shows that in fact one can completely remove the contribution of $k$ at the cost of doubling the constant defect-$280$ bound, namely that for all $k \in \N$,
\[
    \liminf_{r \rightarrow \infty} M(kr) \leq 560.
\]

\begin{corollary}\label{cor:multiples}
    For every positive integer $k$, there exist infinitely many $n \equiv 0 \pmod k$ such that $M(n) \leq 560$.
    In other words, defining $n = kr$, we can write this as
    \[
        \sup_{k \geq 1} \liminf_{r \rightarrow \infty} M(kr) \leq 560.
    \]
\end{corollary}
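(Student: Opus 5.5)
The plan is to rerun the construction of Section~2 with a different norm value. By Proposition~\ref{prop:area} the defect is $140|E|$, so the target $560$ means we need $|E| = 4$. The divisibility condition $k \mid n$ then becomes $k \mid 105q$, which certainly holds if $k \mid q$. So it suffices to produce infinitely many pairs $(p,q)$ that satisfy \eqref{eq:conditions}, have $p^2 - 19q^2 = 4$, and have $k \mid q$.

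The natural choice is to take the trivial solution $(2,0)$ of $p^2-19q^2=4$ and move it along the unit group, exactly as in \eqref{eq:pell}. Let $u_j + v_j\sqrt{19} = (170+39\sqrt{19})^j$, so that $u_j^2 - 19v_j^2 = 1$ by multiplicativity of the norm. Then set $(p_j,q_j) = (2u_j, 2v_j)$, which gives $p_j^2 - 19q_j^2 = 4$ for every $j$.

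Next I check the hypotheses \eqref{eq:conditions} for $j \geq 1$.
\begin{enumerate}
\item Parity holds trivially, since both $p_j$ and $q_j$ are even.
\item We have $q_j > 0$ because the recurrence \eqref{eq:recurrence} has positive entries and $v_1 = 39$.
\item From $p_j^2 = 19q_j^2 + 4$ we get $p_j > q_j$. We also get $p_j < 7q_j$, since $19q_j^2 + 4 < 49q_j^2$ as soon as $q_j \geq 1$.
\end{enumerate}
Proposition~\ref{prop:area} then applies verbatim. Its noncongruence argument only uses $p \neq q$, $p \neq 7q$, $q > 0$ and $E \neq 0$, all of which hold here. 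So each such pair gives a Mondrian partition of the $105q_j \times 105q_j$ square with defect $140 \cdot 4 = 560$.

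The step I expect to need the most care is the divisibility $k \mid q_j$ along an infinite subsequence. I would argue as in Corollary~\ref{cor:residue}. The matrix $A$ from \eqref{eq:recurrence} has determinant $1$, so its powers modulo $k$ form a finite cyclic group, and there is some $N \geq 1$ with $A^N \equiv I \pmod k$. Since $(u_0,v_0) = (1,0)$, we have $(u_{mN}, v_{mN}) \equiv (1,0) \pmod k$ for every $m \geq 1$. Hence $k \mid v_{mN}$, so $k \mid q_{mN}$ and therefore $k \mid n_{mN} = 105q_{mN}$.

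Finally, the $q_{mN}$ are strictly increasing in $m$, because the recurrence has positive entries. This gives infinitely many distinct side lengths $n \equiv 0 \pmod k$ with $M(n) \leq 560$. Taking $\liminf$ over $r$ in $n = kr$ and then the supremum over $k$ yields the displayed inequality.
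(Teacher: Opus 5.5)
Your proposal is correct and follows the paper's proof essentially step for step. You double the unit solutions $(u_j,v_j)$ of $u^2-19v^2=1$ to get norm $E=4$ and hence defect $140\cdot 4=560$. You then use $A^N\equiv I \pmod k$ to force $(u_{mN},v_{mN})\equiv(1,0)\pmod k$, so that $k \mid n = 210\,v_{mN}$. Your restriction to $j\ge 1$ (so that $q>0$) and your explicit check of $q<p<7q$ are slightly more careful than the paper, which does not exclude the degenerate case $j=0$, $q=0$.
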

\begin{proof}
    For $j \in N$, define integers $u_j, v_j$ by $u_j + v_j\sqrt{19} = (170 + 39 \sqrt{19})^j$.
    It does not necessarily hold anymore that $u_j$ and $v_j$ have the same parity, so we take $p = 2u_j$ and $q = 2v_j$ to ensure that our construction is valid, though this increases the norm to $E = 4$.
    These parameters still satisfy \eqref{eq:conditions}, so their square sides $n = 210v_j$ have defect $560$. 
    The recurrence matrix $A$ has a positive power $N$ such that $A^N$ collapses to the identity modulo $k$; taking $j$ to be multiples of $N$ yields $(u_j, v_j) \equiv (1, 0) \pmod k$. 
    These square sides are divisible by $k$, and grow without bound, hence the claimed result.
\end{proof}

An immediate question is the optimality of this construction.
We now prove that this particular parameterization is optimal within its family.

\begin{proposition}
    Among all integer dissections satisfying \eqref{eq:conditions}, the minimal defect attainable for this type of partition is $280$. 
    In particular, it can never achieve defect zero.
\end{proposition}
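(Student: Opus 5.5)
By Proposition~\ref{prop:area}, every admissible choice of $(p,q)$ satisfying \eqref{eq:conditions} gives a partition of defect exactly $140|E|$, where $E = p^2 - 19q^2$. So the statement reduces to a number-theoretic fact: I will show that $|E| \geq 2$ for every such pair, and that $|E| = 2$ is attained.

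Attainment is immediate from the base case $(p,q) = (13,3)$. There $E = -2$, and the Example exhibits a defect of exactly $280$.

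For the lower bound, I first recall why $E \neq 0$: $\sqrt{19}$ is irrational and $q > 0$. This is already noted in the proof of Proposition~\ref{prop:area}, and it gives the "in particular" clause, since a zero defect would force $E = 0$. It remains to exclude $|E| = 1$. Here I use the parity condition $p \equiv q \pmod 2$ from \eqref{eq:conditions}, and split into two cases.

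\emph{Case 1: $p$ and $q$ both even.} Then $4 \mid p^2 - 19q^2$, so $|E| \geq 4$.

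\emph{Case 2: $p$ and $q$ both odd.} Then $p^2 \equiv q^2 \equiv 1 \pmod 8$. Hence $E \equiv 1 - 19 \equiv -18 \equiv 6 \pmod 8$, which gives $E \equiv 2 \pmod 4$ and so $|E| \geq 2$.

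In either case $|E| \geq 2$, so $\defect(\mathcal P) = 140|E| \geq 280$. This is the whole argument.

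There is no real obstacle. The only thing to check carefully is that the formula $140|E|$ from Proposition~\ref{prop:area} holds for every pair satisfying \eqref{eq:conditions}, not just for Pell solutions. It does: the area identities \eqref{eq:areas} are polynomial identities in $p$ and $q$. The maximum and minimum coefficients of $E$ are $\tfrac{175}{2}$ and $-\tfrac{105}{2}$, and they are realised by $R_6$ and $R_2$ respectively when $E > 0$, and the roles swap when $E < 0$.

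One might also remark that the mod-$8$ computation shows a bit more: in the odd case $E \equiv 6 \pmod 8$, so $E \in \{-2, 6, -10, \dots\}$. Thus the value $E = -2$ used in Theorem~\ref{thm:main} is the optimal choice.
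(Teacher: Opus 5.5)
Your proposal is correct and matches the paper's proof. Both split on the parity of $p$ and $q$, show $E \equiv 2 \pmod 4$ in the odd case and $4 \mid E$ with $E \neq 0$ in the even case, apply Proposition~\ref{prop:area}, and get attainment from $(p,q) = (13,3)$. Your mod-$8$ refinement $E \equiv 6 \pmod 8$ and your check that the $140|E|$ formula holds for every admissible pair, not only Pell solutions, are harmless additions; the paper uses the latter implicitly.
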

\begin{proof}
    If $p, q$ are odd, then
    \[
        p^2 - 19q^2 \equiv 2 \pmod{4},
    \]
    so $|E| \geq 2$. 
    If they are both even, then $4 \mid E$, so $E \neq 0$ implies $|E| \geq 4$. 
    Proposition~\ref{prop:area} therefore provides a defect that is at least $280$. 
    Equality is attained by our construction, the norm-$(-2)$ family.
\end{proof}

We wish to reiterate that this does not prove directly a lower bound for $M(n)$, and is not a lower bound over all seven-rectangle tilings. 
However, it does rules out a defect below $280$ -- and in particular, a zero defect -- from picking successive rational approximants inside the family.

\section{Concluding Remarks and Future Directions}
In this paper, we presented an infinite construction that achieves a bounded absolute defect of $280$ for each instance, and showed that this is the smallest defect realizable by partitions of this type.
We additionally showed that for each positive integer $k$, there are infinitely many side lengths $n \equiv 0 \pmod{k}$ with a defect bounded by $560$.
An immediate consequence of this construction is that $M(n)$ does not grow without bound;
more generally, there cannot even exist a divergent function $f : \N \rightarrow \mathbb{R}$ (i.e.: linear, logarithmic, etc.) and positive integer threshold $N$ such that for all $n \geq N$, the defect satisfies $M(n) \geq f(n)$.


We foresee a quick flurry of progressive improvements. 
We intend to follow up on this work with distinct constructions that enjoy the same defect, or -- if possible -- perform better.
There are several more promising angles in the $M(n) = 0$ direction we intend on pursuing, though such a construction would have to lie outside the method of finding increasingly accurate approximations to a configuration with irrational side length.

We conjecture that our construction possesses the lowest defect that one can hope to achieve, among dissections of the same type.
\begin{conjecture}
    For partitions with seven pairwise noncongruent integer-sided rectangles, we conjecture that the above defect bound is optimal, in the sense that 
    \[\liminf\limits_{n\to\infty} M(n) = 280.\]
\end{conjecture}
Such a proof requires more advanced arguments however, so we relegate this to a later paper.

An interesting line of future work is to consider a generalized version of the Mondrian art problem wherein the canvas need not be square, say a $[0,m] \times [0,n]$ rectangle, then ask what bounds are possible for the defect $M(m,n)$.  
In \cite{GCLMR} for instance, it is shown computationally that no perfect rectangular Mondrian partitions exist for $m,n \leq 400$.
We strongly believe that the above seven-rectangle construction can lead to a greatly decreased generalized defect for the rectangular Mondrian art problem, but leave this for future work.
\begin{conjecture}
    We conjecture that the rectangular analogue of the Mondrian art problem correspondingly attains a defect bound of the form 
    \[\liminf\limits_{m,n\to\infty} M(m,n) \leq c,\]
    where $c \in \N$ does not depend on $m$ or $n$, but satisfies $c < 280$.
    Namely, that the rectangular Mondrian problem achieves a lower absolute bound on the defect than the square version.
\end{conjecture}

\bibliographystyle{splncs04}
\bibliography{archive.bib} 
\end{document}